\newtheorem{theorem}{Theorem}[section]
\newtheorem{lemma}[theorem]{Lemma}
\newtheorem{corollary}[theorem]{Corollary}
\newtheorem{proposition}[theorem]{Proposition}
\theoremstyle{definition}
\newtheorem{definition}{Definition}[section]
\def\R{\mathbb{R}}
\def\C{\mathbb{C}}
\def\eps{\varepsilon}
\def\Re{\operatorname{Re}}
\def\Im{\operatorname{Im}}
\def\deg{\operatorname{deg}}
\def\diam{\operatorname{diam}}
\def\endproof{{\mbox{}\nolinebreak\hfill\rule{2mm}{2mm}\par\medbreak}}
\begin{document}

\title{An observation on Tur\'an-Nazarov inequality}

\author{O. Friedland \and Y. Yomdin}

\newcommand\address{\noindent\leavevmode\noindent \\
O. Friedland, \\
Institut de Math\'ematiques de Jussieu, \\
Universit\'e Pierre et Marie Curie (Paris 6) \\
4 Place Jussieu, \\
75005 Paris, France \\
\texttt{e-mail: \small friedland@math.jussieu.fr} \\
\medskip \\
\noindent
Y. Yomdin, \\
Department of Mathematics, \\
The Weizmann Institute of Science, \\
Rehovot 76100, Israel \\
\texttt{e-mail: \small yosef.yomdin@weizmann.ac.il}
}

\date{}

\footnotetext[1]{Primary Classification 26D05 Secondary Classification 30E05, 42A05}
\footnotetext[2]{Keywords. Tur\'an-Nazarov inequality, Metric entropy.}
\footnotetext[3]{The research of the second author was supported by ISF grant No. 264/09 and by the Minerva Foundation.}

\maketitle

\begin{abstract}
The main observation of this note is that the Lebesgue measure $\mu$ in the Tur\'an-Nazarov inequality for exponential polynomials
can be replaced with a certain geometric invariant $\omega \ge \mu$, which can be effectively estimated in terms of the metric
entropy of a set, and may be nonzero for discrete and even finite sets. While the frequencies (the imaginary parts of the exponents)
do not enter the original Tur\'an-Nazarov inequality, they necessarily enter the definition of $\omega$.
\end{abstract}

\bigskip

\section{Introduction}\label{intro}

The classical Tur\'an inequality bounds the maximum of the absolute value of an exponential polynomial $p(t)$ on an interval $B$
through the maximum of its absolute value on any subset $\Omega$ of positive measure. Tur\'an \cite{T} assumed $\Omega$ to be a
subinterval of $B$, and Nazarov \cite{N} generalized it to any subset $\Omega$ of positive measure. More precisely, we have:

\begin{theorem}[\cite{N}]\label{turan}
Let $p(t) = \sum_{k=0}^m c_k e^{\lambda_k t}$ be an exponential polynomial, where $c_k ,\lambda_k \in \C$. Let $B \subset \R$ be
an interval, and let $\Omega \subset B$ be a measurable set. Then
$$
\sup_{B} |p| \le e^{\mu_1(B) \cdot \max|\Re \lambda_k|} \cdot \left( \frac{c \mu_1(B)}{\mu_1(\Omega) }\right)^{m} \cdot \sup_{\Omega} |p|
$$
where $\mu_1$ is the Lebesgue measure on $\R$ and $c>0$ is an absolute constant.
\end{theorem}

In this note, we generalize and strengthen the Tur\'an-Nazarov inequality (and its multi-dimensional analogue stated below) by replacing
the Lebesgue measure of $\Omega$ with a simple geometric invariant $\omega_D(\Omega)$. We call it the metric span of $\Omega \subset {\mathbb R}^n$
with respect to a ``diagram'' $D$ of $p$ comprising the degree of $p$ and its maximal frequency $\lambda$. The metric span always bounds the Lebesgue
measure from above, and it is strictly positive for sufficiently dense discrete (in particular, finite) sets $\Omega$. It can be effectively
estimated in terms of the metric entropy of $\Omega$. See \cite{Y} and Section \ref{examples} below for some basic properties of $\omega_D(\Omega)$.

\smallskip

Our approach is as follows: Put $\rho=\sup_{\Omega} |p|$. Then $\Omega \subset V_\rho$, where $V_\rho = V_\rho(p)=\{t\in B, \ |p(t)|\leq \rho \}$
is the $\rho$-sublevel set of the exponential polynomial $p$. Next we use a theorem of Khovanskii in \cite{K} to give an upper bound on the
number of solutions of $|p(t)|= \rho$ in an interval $B$ in terms of the length of the interval, the degree of $p$ and the maximal frequency
of $p$. This also bounds from above the number of intervals in $V_\rho$. Next, for $V_\rho$, consisting of a finite number of closed intervals,
it is easy to compare the Lebesgue measure $\mu_1(V_\rho)$ and the metric entropy of $\Omega \subset V_\rho$. We conclude that
$\mu_1(V_\rho)\geq \omega_D(\Omega).$ Finally, we apply the original Tur\'an-Nazarov inequality of Theorem \ref{turan} to the subslevel set $V_\rho$.

\smallskip

With appropriate modifications this approach works also in higher dimensions. Originally it was applied in \cite{Y} in order to produce a Remez-type inequality for algebraic polynomials on discrete sets. The corresponding invariant $\omega_{n,d}(\Omega)$ depends only on the dimension and the degree,
and uses Vitushkin's bound (see \cite{V}, and \cite{FY} for further developments in this direction) for the metric entropy of semialgebraic sets instead of the Khovanskii's bound. It replaces the Lebesgue measure of $\Omega$ in the classical Remez inequality for algebraic polynomials (\cite{Rem,Bru.Gan}).

\medskip

Now we give an accurate statement of our main results in one-dimensional case. For a given exponential polynomial 
$p(t) = \sum_{k=0}^m c_k e^{\lambda_k t}$ with $c_k ,\lambda_k \in \C$, and for a given interval $B \subset \R$, we define the diagram 
$D=D(p,B)=(m,\lambda,l)$. It comprises the degree $m$ of $p$, the maximal frequency $\lambda=\displaystyle \max_{k=0,\dots,m} |\Im \lambda_k|$, and 
the length $l=\mu_1(B)$.

Define the constant $M_D$ (which we call a ``frequency bound" for $p$) as $M_D = \lfloor {d \over 2} \rfloor + 1$, where $d=d(m,\lambda,l)$ is the
maximal number of solutions of $|p|=\rho, \ \rho \in \R,$ on an interval of length $l$, for a complex exponential polynomial $p$ of degree $m$ and of
maximal frequency $\lambda$.

For any bounded subset $\Omega \subset {\mathbb R}$ and for $\epsilon > 0$ let $M(\epsilon,\Omega)$ be the minimal number of $\epsilon$-intervals
covering $\Omega$. Now the metric span $\omega_D$ is defined as follows:

\begin{definition}\label{span}
The metric span $\omega_D(\Omega)$ of $\Omega \subset {\mathbb R}$
is given by
$$
\omega_{D}(\Omega) = \sup_{\eps > 0} \eps [M(\eps , \Omega) - M_D]
$$
\end{definition}

Now we can state our main result in one-dimensional case:

\begin{theorem}\label{d.turan}
Let $p(t) = \sum_{k=0}^m c_k e^{\lambda_k t}$ be an exponential polynomial, where $c_k ,\lambda_k \in \C$. Let $B \subset \R$ be an interval, and
let $\Omega \subset B$ be any set. Then
$$
\sup_{B} |p| \le e^{\mu_1(B) \cdot \max|\Re \lambda_k|} \cdot \left( \frac{c \mu_1(B)}{\omega_D(\Omega) }\right)^{m} \cdot \sup_{\Omega} |p|
$$
where $c>0$ is an absolute constant.
\end{theorem}
Using Khovanskii's bound in \cite{K} we can give more explicit (although somewhat cumbersome) expression for $d$, and hence for $M_D$ and $\omega_D$.
Let us put $\tilde d= \tilde d(m,\lambda,l)=C(m) l \lambda$. Here $C(m)$ is defined as $C(m)=n(2n+1)^{2n}2^{2n^2}$, for $n={{(m+1)(m+2)}\over 2}+1$.
Next we define $\tilde M_D = \lfloor {{\tilde d} \over 2} \rfloor + 1,$ and $\tilde \omega_{D}(\Omega) = \sup_{\eps > 0} \eps [M(\eps , \Omega) - \tilde M_D].$\
As we shall see below, always $d\leq \tilde d$, and hence $\tilde \omega_{D}(\Omega) \leq \omega_{D}(\Omega)$.

\begin{corollary}\label{Khov}
Under conditions of Theorem \ref{d.turan}
$$
\sup_{B} |p| \le e^{\mu_1(B) \cdot \max|\Re \lambda_k|} \cdot \left( \frac{c \mu_1(B)}{\tilde \omega_D(\Omega) }\right)^{m} \cdot \sup_{\Omega} |p|.
$$
\end{corollary}

\noindent{\bf Remark 1.} The same type of reasoning applies to any class of functions for which a Remez-type inequality and a uniform bound on the
number of zeroes hold.

\smallskip

\noindent{\bf Remark 2.} For any \textit{measurable} $\Omega$ we always have $\omega_{D}(\Omega) \ge \mu_{1}(\Omega)$, with equality if $\Omega$ is a
sublevel set of $p$ (see Section \ref{examples1} below). Thus, Theorem \ref{d.turan} provides a true generalization and strengthening of the
Tur\'an-Nazarov inequality given in Theorem \ref{turan}.

\smallskip

\noindent{\bf Remark 3.} We insist in Definition \ref{span} above that $\omega_D$ depends only on the {\it imaginary parts} of the exponents $\lambda_k$,
i.e. on the frequencies (and consequently we get a rather complicated bound in Corollary \ref{Khov}. Compare Theorems \ref{d.turan1}, \ref{d.turan2} below).

But this separation allows us to preserve and further develop a remarkable feature of the original Tur\'an-Nazarov inequality: The bound does not depend
on the frequencies, i.e. on the imaginary parts of $\lambda_k$ in $p$. When we allow into consideration \textit{discrete} (in particular, \textit{finite})
sets $\Omega$, this feature certainly cannot be completely preserved: Already for a trigonometric polynomial $p(t)= \sin (\lambda t)$, the set $\Omega$ of
its zeroes (on which the Tur\'an-Nazarov inequality certainly fails) consists of all the points $x_j= {{j\pi}\over \lambda}, \ j \in {\mathbb N}$, and the
number of such points in any interval $B$ is of order ${{\mu(B)\lambda}\over \pi}$.

However, Theorem \ref{d.turan} separates the roles of the real and imaginary parts of the exponents: The first enter the main bound, as in the original
Tur\'an-Nazarov inequality, while the second enter the definition of the span $\omega_D(\Omega)$. As the density of $\Omega$ growth, the influence of the
frequencies decreases: See Section \ref{examples} below.

\smallskip

\noindent{\bf Remark 4.} Recently promising applications of Theorem \ref{d.turan} have been found in Signal Processing, specifically, in non-uniform exponential sampling (see (\cite{S,Bat.Yom,Bat.Sar.Yom}) and references therein).

\medskip

There is a version of Tur\'an-Nazarov inequality for quasipolynomials in one or several variables due to A. Brudnyi \cite[Theorem 1.7]{B}. While less
accurate than the original one (in particular, the role of real and complex parts of the exponents is not separated) this result gives an important
information for a wider class of quasipolynomials. In Section \ref{multi.dim} we provide a strengthening of Brudnyi's result in the same lines as above:
We replace the Lebesgue measure with an appropriate ``metric span" which always bounds the Lebesgue measure from above and is strictly positive for
sufficiently dense discrete (in particular, finite) sets.

\smallskip

The authors would like to thank the referee for the remarks and suggestions, significantly improving the presentation. In particular, ``an equality'' 
part of Proposition \ref{Arithm} in Section \ref{examples} was suggested by the referee.

\section{Proofs and examples in dimension one}\label{one.dim}

In this section we prove Theorem \ref{d.turan} and provide some of its consequences.

\medskip

\noindent {\bf Proof of Theorem \ref{d.turan}.} Let $p(t) = \sum_{k=0}^m c_k e^{\lambda_k t}$ be an exponential polynomial, $c_k ,\lambda_k \in \C$.
Let $B \subset \R$ be an interval. We consider the sublevel set $V_\rho = \{ t \in B : \vert p(t) \vert \le \rho \}$ of $p(t)$. By definition,
$d=d(m,\lambda,\mu_1(B))$ is the maximal number of solutions of $|p|=\rho, \ \rho \in \R,$ on the interval $B$. Hence the boundary of $V_\rho$ consists
of at most $d$ points (including the endpoints). Therefore, the set $V_\rho$ consists of at most $M_D=\lfloor {d \over 2} \rfloor + 1$ subintervals
$\Delta_i$ (i.e. connected components of $V_\rho$), with $M_D$ defined as in Theorem \ref{d.turan}. Let us cover each of these subinterval $\Delta_i$
by the adjacent $\eps$-intervals $Q_{\eps}$ starting with the left endpoint. Since all the adjacent $\eps$-intervals, except possibly one, are inside
$\Delta_i$, their number doesn't exceed $|\Delta_i| / \eps + 1$. Thus, we have
$$
M(\eps,V_\rho) \le (\lfloor {d \over 2} \rfloor + 1) + \mu_1 (V_\rho) / \eps=M_D+\mu_1 (V_\rho) / \eps.
$$
Now let a set $\Omega \subset B$ be given.

\begin{lemma}\label{meas.span}
If $\Omega \subset V_\rho$ for a certain $\rho \geq 0$ then $\mu_1(V_\rho) \geq \omega_D(\Omega)$.
\end{lemma}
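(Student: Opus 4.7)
The plan is to chain together the two monotonicity facts that are already on the table. Since $\Omega \subset V_\rho$, any covering of $V_\rho$ by $\eps$-intervals automatically covers $\Omega$, so $M(\eps,\Omega) \le M(\eps,V_\rho)$ for every $\eps>0$. Combined with the covering estimate established immediately before the lemma, namely
$$
M(\eps,V_\rho) \le M_D + \mu_1(V_\rho)/\eps,
$$
this yields
$$
M(\eps,\Omega) - M_D \le \mu_1(V_\rho)/\eps
$$
for every $\eps>0$.

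Multiplying by $\eps$ and taking the supremum over $\eps>0$ on the left gives $\omega_D(\Omega) \le \mu_1(V_\rho)$, which is exactly the claim. The only delicate point is the sign: one needs the inequality $\eps[M(\eps,\Omega)-M_D] \le \mu_1(V_\rho)$ to hold uniformly in $\eps$ so that passing to the supremum is legal, but this is immediate from the previous display (no rearrangement that flips a sign is required).

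There is essentially no obstacle here; the lemma is a clean bookkeeping consequence of the covering bound $M(\eps,V_\rho) \le M_D + \mu_1(V_\rho)/\eps$ derived via Lemma \ref{Khov.bound.1} and the observation that any cover of a superset is a cover of a subset. The only thing worth noting is that the constant $M_D$ was chosen precisely so that it absorbs the number of connected components of $V_\rho$, which is what makes the subtraction $M(\eps,\Omega)-M_D$ in Definition \ref{span} match the overhead in the covering estimate. This matching is what allows the metric span of $\Omega$ to be bounded by the Lebesgue measure of any sublevel set containing it, and it is the mechanism through which $\omega_D$ will play the role of $\mu_1(\Omega)$ in the subsequent proof of Theorem \ref{d.turan}.
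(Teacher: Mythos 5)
Your proposal is correct and coincides with the paper's own argument: both use the monotonicity $M(\eps,\Omega)\le M(\eps,V_\rho)$ together with the covering bound $M(\eps,V_\rho)\le M_D+\mu_1(V_\rho)/\eps$, then multiply by $\eps$ and take the supremum over $\eps>0$ as in Definition \ref{span}. Nothing further is needed.
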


\begin{proof}
If $\Omega \subset V_\rho$ then for each $\eps > 0$ we have $M(\eps,\Omega)\leq M(\eps,V_\rho) \le M_D + \mu_1 (V_\rho) / \eps$, or
$\mu_1 (V_\rho)\ge \eps[M(\eps,\Omega)-M_D]$. Taking supremum with respect to $\eps > 0,$ via Definition \ref{span} we conclude that
$\mu_1(V_\rho) \geq \omega_D(\Omega)$.
\end{proof}

Let us now put $\hat \rho = \displaystyle{\sup_{\Omega}} |p|$. Then we have $\Omega \subset V_{\hat \rho}$. Applying Lemma \ref{meas.span}
we get $\mu_1(V_{\hat \rho}) \geq \omega_D(\Omega)$. Finally, we apply the original Tur\'an-Nazarov inequality (Theorem \ref{turan}) to the subset
$V_{\hat \rho} \subset B$ on which $|p|$ by definition does not exceed $\hat \rho$. This completes the proof of Theorem \ref{d.turan}. \endproof

\medskip

\noindent {\bf Proof of Corollary \ref{Khov}.} Let, as above, $p(t) = \sum_{k=0}^m c_k e^{\lambda_k t}$ be an exponential polynomial, where
$c_k ,\lambda_k \in \C$. Let us write $c_k=\gamma_k e^{i\phi_k}, \ \lambda_k = a_k+ib_k, \ k=0,1,\dots,m$.

\begin{lemma}\label{abs.val.p}
$$
\vert p(t) \vert^2 =2\sum_{0\le k\le l \le m}\gamma_k \gamma_l e^{(a_k+a_l)t} \cos(\phi_k - \phi_l + (b_k-b_l)t)
$$
is an exponential-trigonometric polynomial of degree ${{(m+1)(m+2)}\over 2}$ with real coefficients.
\end{lemma}

\begin{proof}
We have
$$
p(t) = \sum_{k=0}^m \gamma_k e^{i\phi_k} e^{(a_k+ib_k)t}= \sum_{k=0}^m \gamma_k e^{a_kt+i(\phi_k+b_kt)}, \ \bar p(t)= \sum_{k=0}^m \gamma_k e^{a_kt-i(\phi_k+b_kt)}
$$

Therefore
$$
\vert p(t) \vert^2 = p(t)\bar p(t) = \sum_{k,l=0}^m \gamma_k \gamma_l e^{(a_k+a_l)t+i(\phi_k-\phi_l +(b_k-b_l)t)}
$$

Adding the expressions in this sum for the indices $(k,l)$ and $(l,k)$ we get
$$
\vert p(t) \vert^2 =2\sum_{k\le l}\gamma_k \gamma_l e^{(a_k+a_l)t} \cos(\phi_k - \phi_l + (b_k-b_l)t)
$$

This completes the proof.
\end{proof}

\medskip

The following lemma provides us with a bound on the number of real solutions of the equation $\vert p(t) \vert=\rho$. It is a direct consequence of
Khovanskii's bound Theorem \ref{thm:khovanskii} and Lemma \ref{khovanskii} in Section \ref{covering.number} below.

\begin{lemma}\label{Khov.bound.1}
For $p(t)$ as above and for each positive $\eta>0$, the number of non-degenerate solutions of the equation $\vert p(t) \vert=\eta$ in the interval
$B\subset {\mathbb R}$ does not exceed
$$
\tilde d=C(m) \mu_1(B)\lambda
$$
where $\lambda= \max |\Im \lambda_k|$, and $C(m)=n(2n+1)^{2n}2^{2n^2}$, for $n={{(m+1)(m+2)}\over 2}+1$.
\end{lemma}
So we have $d\leq \tilde d, \ M_d \leq M_{\tilde d}, \ \omega_D(\Omega) \geq \tilde \omega_D(\Omega).$ This completes the proof of Corollary \ref{Khov}.
\endproof

\medskip

We expect that the expression for $C(m)$ in Lemma \ref{Khov.bound.1} provided by the general result of Khovanskii can be strongly improved in our 
specific case. Let us recall the following result of Nazarov \cite[Lemma 4.2]{N}, which gives a much more realistic bound on the local distribution 
of zeroes of an exponential polynomial if the real parts of its exponents are relatively small:

\begin{lemma}\label{naz}
Let $p(t) = \sum_{k=0}^m c_k e^{\lambda_k t}$ be an exponential polynomial, $c_k ,\lambda_k \in \C$. Then the number of zeroes of $p(z)$ inside each 
disk of radius $r>0$ does not exceed $4m+7\hat \lambda r$, where $\hat
\lambda = \max|\lambda_k|$.
\end{lemma}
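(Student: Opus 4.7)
My plan is to prove the bound via Jensen's formula on a slightly enlarged disk, combined with a Turán/Vandermonde-type lower bound for $|p|$ at a well-chosen reference point. After translating, it suffices to count the zeros of $p$ in $D(0,r)$. Set $R=er$, so that $\log(R/r)=1$. If $z_{*}\in D(0,r)$ is chosen with $p(z_{*})\ne 0$, then applying Jensen's formula to $p$ on the disk $D(z_{*},R-r)$ gives
\[
n\,\log\frac{R-r}{r} \;\le\; \frac{1}{2\pi}\int_{0}^{2\pi}\log|p(z_{*}+(R-r)e^{i\theta})|\,d\theta \;-\; \log|p(z_{*})|,
\]
where $n$ counts the zeros of $p$ in $D(0,r)\subset D(z_{*},R-r)$.

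For the integrand I would use the crude pointwise bound $|p(z)|\le (m+1)\max_{k}|c_{k}|\cdot e^{\hat\lambda R}$ valid on $|z|\le R$, so that the integral contributes at most $\log\bigl((m+1)\max_{k}|c_{k}|\bigr)+\hat\lambda R$. With $R=er$ this is the source of the $\hat\lambda r$ term, and absorbing the constant $e$ yields the $7\hat\lambda r$ in the statement.

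The main obstacle is bounding $-\log|p(z_{*})|$, since the value $|p(0)|$ may be arbitrarily small compared to $\max|c_{k}|$ and so the naïve choice $z_{*}=0$ gives no useful bound. I would resolve this by a Vandermonde argument: pick $m+1$ sufficiently separated probe points $w_{0},\ldots,w_{m}\in D(0,r/2)$, express the vector $\bigl(p(w_{j})\bigr)_{j=0}^{m}$ as the image of $(c_{k})_{k=0}^{m}$ under the matrix $V=(e^{\lambda_{k}w_{j}})_{j,k}$, and invert. A Cramer/Vandermonde-determinant estimate bounds $\|V^{-1}\|$ by $e^{O(m+\hat\lambda r)}$, which forces some $z_{*}\in\{w_{0},\ldots,w_{m}\}$ to satisfy $|p(z_{*})|\ge\max_{k}|c_{k}|\cdot e^{-O(m+\hat\lambda r)}$. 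Inserting this into the Jensen bound gives $n\le C_{1}m+C_{2}\hat\lambda r$, and careful tracking of the constants along the way should produce the explicit $4m+7\hat\lambda r$. The Vandermonde inversion with exponential entries is the technical heart of the argument and the step I expect to require the greatest care; the remaining estimates are routine complex analysis.
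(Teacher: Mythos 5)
First, a remark on scope: the paper does not prove this statement at all --- it is quoted verbatim from Nazarov \cite[Lemma 4.2]{N} --- so there is no internal proof to compare against, and I can only assess your argument on its own terms. Your Jensen-formula skeleton is the right one (and is essentially how Nazarov proceeds), but the step you yourself flag as the technical heart --- the Vandermonde inversion giving $|p(z_{*})|\ge \max_{k}|c_{k}|\,e^{-O(m+\hat\lambda r)}$ --- is false, and the lemma cannot be proved through the intermediate quantity $\max_{k}|c_{k}|$ at all. The matrix $V=(e^{\lambda_{k}w_{j}})$ is a generalized Vandermonde whose determinant degenerates as the exponents cluster: already for $p(t)=e^{\delta t}-1$ (so $m=1$, $\hat\lambda=\delta$, $\max_{k}|c_{k}|=1$) one has $\sup_{|z|\le r}|p(z)|\le \delta r e^{\delta r}\to 0$ as $\delta\to 0$ with $r$ fixed, so no probe point $z_{*}\in D(0,r/2)$ can satisfy $|p(z_{*})|\ge e^{-O(m+\hat\lambda r)}$. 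The norm $\Vert V^{-1}\Vert$ is governed by the pairwise separations $|\lambda_{k}-\lambda_{l}|$, which do not appear in the statement, and not by $\hat\lambda$ alone; no amount of careful constant-tracking can repair this.

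The standard repair, and Nazarov's actual route, is to eliminate the coefficients entirely: one first proves a doubling (Bernstein-type) inequality comparing $\sup_{D(z_{0},R)}|p|$ directly with $\sup_{D(z_{0},r)}|p|$, with a factor of the form $(CR/(R-r))^{m}e^{C'\hat\lambda R}$, by induction on the number of exponential terms (factor out $e^{\lambda_{0}z}$ and differentiate to drop one term). Taking $z_{*}$ to be a point of the closed disk $\overline{D(z_{0},r)}$ where $|p|$ attains its maximum, the quantity $\frac{1}{2\pi}\int\log|p|\,d\theta-\log|p(z_{*})|$ in Jensen's formula is then bounded by the logarithm of the doubling constant, i.e.\ by $O(m)+O(\hat\lambda r)$, with no reference to $\max_{k}|c_{k}|$. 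A smaller geometric point: with $z_{*}\in D(0,r)$ arbitrary you only have $D(0,r)\subset D(z_{*},2r)$, so the left-hand side of your Jensen inequality should read $n\log\frac{R-r}{2r}$ rather than $n\log\frac{R-r}{r}$; this only affects constants, but as written the displayed inequality is not justified.
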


The reason we use the Khovanskii bound in Theorem \ref{d.turan} is that it involves only the imaginary parts of the exponents $\lambda_k$. In contrast, 
the bound of Lemma \ref{naz} is in terms of $\hat \lambda = \max|\lambda_k|$ (as opposed to $\max|\Im \lambda_k|$). So for the real parts of the 
exponents of $p$ large, the Khovanskii bound may be better. 

In order to apply Lemma \ref{naz} we notice that
$$
\vert p(t) \vert^2 = p(t)\bar p(t) = \sum_{k,l=0}^m c_k \bar c_l e^{(\lambda_k + \bar \lambda_l) t}
$$
is an exponential polynomial of degree at most $m^2$ with the maximal absolute value of the exponents not exceeding $2\hat \lambda$. Adding a constant 
adds at most one to the degree. We conclude that the number of real solutions of $\vert p(t) \vert =\eta$ inside the interval $B$ does not exceed 
$d_1=4m^2+14\hat \lambda \mu_1(B)$. Now we define $\omega'_D$ putting $M'_D=\lfloor {d_1 \over 2} \rfloor + 1$ in Definition \ref{span}.
Repeating word by word the proof of Theorem \ref{d.turan} above we obtain:

\begin{theorem}\label{d.turan1}
For $p(t)$ as above
$$
\sup_{B} |p| \le e^{\mu_1(B) \cdot \max|\Re \lambda_k|} \cdot \left( \frac{c \mu_1(B)}{\omega'_D(\Omega) }\right)^{m} \cdot \sup_{\Omega} |p| .
$$
\end{theorem}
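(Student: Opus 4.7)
The plan is to run the proof of Theorem~\ref{d.turan} verbatim, substituting the Nazarov-type bound $d_1 = 4m^2 + 14\hat\lambda\,\mu_1(B)$ (derived in the paragraph preceding the statement) for the Khovanskii bound of Lemma~\ref{Khov.bound.1}. Concretely, $|p(t)|^2 = \sum_{k,l=0}^m c_k \bar c_l e^{(\lambda_k + \bar\lambda_l)t}$ is an exponential polynomial whose exponents satisfy $|\lambda_k+\bar\lambda_l| \le 2\hat\lambda$, and subtracting the constant $\eta$ adds at most one term. Since the interval $B$ is contained in a complex disk of radius $\mu_1(B)$ (centered, say, at its midpoint), Lemma~\ref{naz} applied to $|p|^2 - \eta$ yields $d_1$ as an upper bound for the number of real zeros of $|p|^2-\eta$ in $B$.

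With this in hand, the sublevel set $V_\rho = \{ t \in B : |p(t)| \le \rho \}$ has boundary of cardinality at most $d_1$, so it is a union of at most $M'_D = \lfloor d_1/2 \rfloor + 1$ subintervals. Covering each component by adjacent $\eps$-intervals starting from its left endpoint gives, exactly as in the proof of Theorem~\ref{d.turan},
$$
M(\eps, V_\rho) \le M'_D + \mu_1(V_\rho)/\eps .
$$
For any $\Omega \subset V_\rho$, monotonicity of $M(\eps, \cdot)$ in the set argument gives $\mu_1(V_\rho) \ge \eps(M(\eps,\Omega) - M'_D)$; taking the supremum over $\eps > 0$ and invoking the definition of $\omega'_D$ (in which $M_D$ is replaced by $M'_D$) produces the direct analog of Lemma~\ref{meas.span}: namely, $\mu_1(V_\rho) \ge \omega'_D(\Omega)$ whenever $\Omega \subset V_\rho$.

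Setting $\hat\rho = \sup_\Omega |p|$, we have $\Omega \subset V_{\hat\rho}$ by definition, whence $\mu_1(V_{\hat\rho}) \ge \omega'_D(\Omega)$. Applying the classical Tur\'an-Nazarov inequality (Theorem~\ref{turan}) to the measurable subset $V_{\hat\rho} \subset B$, on which $|p|\le \hat\rho$, yields the claimed inequality. No real obstacle arises: the whole argument is a mechanical transcription of the proof of Theorem~\ref{d.turan} with $M'_D$ replacing $M_D$; the only genuinely new ingredient is the zero-count derivation of $d_1$ via Lemma~\ref{naz}, which is already supplied in the preamble to the theorem.
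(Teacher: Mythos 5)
Your proposal is exactly the paper's argument: the paper derives $d_1$ from Lemma \ref{naz} in the paragraph before the statement and then says the theorem follows by ``repeating word by word the proof of Theorem \ref{d.turan}'' with $M'_D$ in place of $M_D$, which is precisely what you do. No discrepancies worth noting.
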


For the case of a real exponential polynomial $p(t) = \sum_{k=0}^m c_k e^{\lambda_k t}$, $c_k ,\lambda_k \in \R$, we get an especially simple and sharp result. Notice that the number of zeroes of a real exponential
polynomial is always bounded by its degree $m$ (indeed, the ``monomials" $e^{\lambda_k t}$ form a Chebyshev system on each real interval). Applying this fact in the same way as above we get

\begin{theorem}\label{d.turan2}
For $p(t)$ a real exponential polynomial of degree $m$
$$
\sup_{B} |p| \le e^{\mu_1(B) \cdot \max|\lambda_k|} \cdot \left( \frac{c \mu_1(B)}{\omega''_D(\Omega) }\right)^{m} \cdot \sup_{\Omega} |p|
$$
where $\omega''_D(\Omega)= \sup_{\eps>0} \eps [M(\eps,\Omega)-m]$.
\end{theorem}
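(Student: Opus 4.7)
The plan is to execute the proof of Theorem \ref{d.turan} verbatim, with one input swapped out: wherever the argument invoked the Khovanskii bound of Lemma \ref{Khov.bound.1} on the number of real roots of $\vert p \vert^{2} = \eta$, I would instead invoke the Chebyshev-system bound for real exponential polynomials. Concretely, if $p(t) = \sum_{k=0}^{m} c_{k} e^{\lambda_{k} t}$ with all $\lambda_{k}\in\R$, then the derivative $p'(t)=\sum_{k=0}^{m} c_{k}\lambda_{k} e^{\lambda_{k} t}$ lies in the same $(m+1)$-dimensional Chebyshev space, and hence has at most $m$ real zeros on any interval; likewise for $p$ itself.

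The main new input is to bound the number of connected components of the sublevel set $V_{\rho} = \{t\in B : |p(t)|\le \rho\}$ by $m$ using only $p'$. Between any two consecutive components of $V_{\rho}$, one has $|p|>\rho\ge 0$, so $p$ is of constant sign there and $|p|$ attains an interior local maximum; this maximum is a critical point of $p$, i.e.\ a zero of $p'$. Distinct gap-intervals give distinct zeros of $p'$, so the number of internal gaps of $V_{\rho}$ in $B$ is at most $m$, and the number of components is bounded accordingly. Covering each component by adjacent $\eps$-intervals exactly as in the proof of Theorem \ref{d.turan} then yields
$$
M(\eps, V_{\rho}) \le m + \frac{\mu_{1}(V_{\rho})}{\eps}.
$$

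With this bound in hand, the rest of the proof of Theorem \ref{d.turan} transports unchanged. One reproduces the statement and proof of Lemma \ref{meas.span} with $M_{D}$ replaced by $m$ and $\omega_{D}$ by $\omega''_{D}$: whenever $\Omega \subset V_{\rho}$, rearranging the covering inequality and taking $\sup_{\eps>0}$ gives $\mu_{1}(V_{\rho}) \ge \omega''_{D}(\Omega)$. Setting $\hat\rho = \sup_{\Omega}|p|$, so that $\Omega \subset V_{\hat\rho}$, and applying the classical Turán--Nazarov inequality (Theorem \ref{turan}) to the measurable set $V_{\hat\rho} \subset B$ on which $|p|\le \hat\rho$, produces
$$
\sup_{B}|p| \le e^{\mu_{1}(B)\cdot\max|\Re\lambda_{k}|}\left(\frac{c\,\mu_{1}(B)}{\mu_{1}(V_{\hat\rho})}\right)^{m}\hat\rho \le e^{\mu_{1}(B)\cdot\max|\lambda_{k}|}\left(\frac{c\,\mu_{1}(B)}{\omega''_{D}(\Omega)}\right)^{m}\sup_{\Omega}|p|,
$$
using $|\Re\lambda_{k}| = |\lambda_{k}|$ for real exponents in the last step.

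The only real point of delicacy is Step 1, the component count: a naive approach based on the zeros of $p^{2}-\rho^{2}$ would lose the sharp dependence on $m$ (the exponent $p^{2}$ has roughly $m^{2}/2$ distinct frequencies), and using the zeros of $p\pm\rho$ is awkward because adding a constant enlarges the Chebyshev system. Routing the count through $p'$, which stays in the original Chebyshev space, is exactly what yields the clean bound $m$ and thereby the sharp constant in the definition of $\omega''_{D}$.
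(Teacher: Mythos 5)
Your overall strategy coincides with the paper's: the paper dispatches this theorem in one sentence (``the number of zeroes of a real exponential polynomial is bounded by its degree $m$; applying this fact in the same way as above\dots''), and your implementation --- counting the components of $V_\rho$ via the zeros of $p'$, which stays in the same $(m+1)$-dimensional Chebyshev space, then running the covering estimate, the analogue of Lemma \ref{meas.span}, and Theorem \ref{turan} applied to $V_{\hat\rho}$ --- is the natural way to flesh that sentence out. The final assembly of the inequality, including $|\Re\lambda_k|=|\lambda_k|$, is fine.

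There is, however, an off-by-one gap at the decisive step, and it is not cosmetic. From ``each internal gap of $V_\rho$ contains a zero of $p'$'' and ``$p'$ has at most $m$ zeros'' you obtain at most $m$ internal gaps, hence at most $m+1$ connected components of $V_\rho$; the covering argument of Theorem \ref{d.turan} then yields $M(\eps,V_\rho)\le (m+1)+\mu_1(V_\rho)/\eps$, not the inequality $M(\eps,V_\rho)\le m+\mu_1(V_\rho)/\eps$ that you display. What your argument actually proves is therefore the theorem with $\omega''_D$ replaced by $\sup_{\eps>0}\eps\,[M(\eps,\Omega)-(m+1)]$, which vanishes for any $(m+1)$-point set $\Omega$ --- exactly the configuration the paper invokes to claim sharpness. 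The bound $m+1$ on the number of components is genuinely attained: for $m=1$, $p(t)=e^{t}-e^{2t}$ on a suitable interval $B$ has a sublevel set with two components (one around the simple zero at $t=0$, one adjacent to the left end of $B$ where $p$ is already small), and one can choose $B$ and $\rho$ so that the total length of these two components is smaller than their separation; then for a two-point $\Omega$ with one point in each component the key intermediate claim $\mu_1(V_{\hat\rho})\ge\omega''_D(\Omega)$ (with the $-m$ normalization) is simply false, even though the final inequality is rescued by the factor $e^{\mu_1(B)\max|\lambda_k|}$. So to get the statement exactly as written you need an extra idea --- some argument that the ``extra'' $(m+1)$-st component is forced to be boundary-adjacent or to carry enough measure --- or you should weaken the definition of $\omega''_D$ to use $M(\eps,\Omega)-(m+1)$. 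The paper's one-line proof is equally silent here, so the gap may be inherited from the source, but as written your covering inequality does not follow from the sentence that precedes it.
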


Notice that in this case the metric span $\omega''_{D}(\Omega)$ depends only on the degree $m$ of $p$ and the result is sharp: For any $\Omega$
consisting of at least $m+1$ points there is an inequality of the required form, while for each $m$ points there is a real exponential polynomial
$p(t)$ of degree $m$ vanishing at exactly these points.

\subsection{Some examples}\label{examples}

In this section we give just a couple of examples illustrating the properties of the span $\omega_D$, as well as the scope and possible
applications of Theorem \ref{d.turan}.

\subsubsection{$\omega_D(\Omega)$ versus $\mu_1(\Omega)$}\label{examples1}

Let us recall that for a given interval $B$ and for an exponential polynomial $p(t) = \sum_{k=0}^m c_k e^{\lambda_k t}$, $c_k ,\lambda_k \in \C$,
its diagram $D=D(p,B)=(m,\lambda,l)$ comprises the degree $m$ of $p$, the maximal frequency $\lambda=\displaystyle \max_{k=0,\dots,m} |\Im \lambda_k|$,
and the length $l=\mu_1(B)$. Next, $d=d(m,\lambda,l)$ is the maximal number of solutions of $|p|=\rho, \ \rho \in \R,$ on an interval of length $l$,
$M_D = \lfloor {d \over 2} \rfloor + 1$, and $\omega_{D}(\Omega) = \sup_{\eps > 0} \eps [M(\eps , \Omega) - M_D]$.

\begin{proposition} \label{Arithm}
For any measurable $\Omega$ we have $\omega_{D}(\Omega) \ge \mu_{1}(\Omega)$, with equality if $\Omega=V_\rho$ is a sublevel set of $p$.
\end{proposition}
\begin{proof}
Indeed, for any $\eps>0$ we have $M(\eps , \Omega) \geq \mu_1(\Omega) / \eps$. Now substitute into the expression for and let $\eps$ tend to zero. We
get $\omega_{D}(\Omega) \ge \mu_{1}(\Omega)$. In order to show the equality for $\Omega=V_\rho$ being a sublevel set of $p$, we shall prove a slightly
more general statement: {\it Let $\Omega \subset B$ consist of $s$ closed intervals. Then for $s\leq M_D$ we have $\omega_{D}(\Omega) = \mu_{1}(\Omega)$}.
Indeed, let $\eps >0$ be given. We cover each of these subinterval $\Delta_i, \ i=1,\ldots,s$ of $\Omega$ by the adjacent $\eps$-intervals $Q_{\eps}$
starting with the left endpoint. Since all the adjacent $\eps$-intervals, except possibly one, are inside $\Delta_i$, their number doesn't exceed
$|\Delta_i| / \eps + 1$. Thus, we have $M(\eps,\Omega) \le s + \mu_1 (\Omega) / \eps,$ and therefore
$$
\eps[M(\eps,\Omega)-M_D]\le \eps[s + \mu_1 (\Omega) / \eps-M_D]\le \mu_1 (\Omega),
$$
if $s\leq M_D$. Since this inequality holds for each $\eps>0$, we conclude that $\omega_{D}(\Omega) \le \mu_1 (\Omega)$. \end{proof}

\smallskip

\noindent{\bf Remark} It looks plausible that the equality in Proposition \ref{Arithm} happens if {\it and only if} $\Omega=V_\rho$ is a sublevel set of $p$,
i.e. it consist of $s$ closed intervals, with $s\leq M_D$. Indeed, for one interval $\Delta_i$ if we take $\eps$ smaller than, but very close to
$|\Delta_i| / n,$ then we have $M(\eps,\Delta_i)$ very close to $|\Delta_i| / \eps + 1$. For two intervals, if their lengths are commeasurable, in exactly
the same way we can find $\eps$ in such a way that $M(\eps,\Delta_i \cup \Delta_j)$ very close to $(|\Delta_i|+ |\Delta_j|) / \eps + 2.$ If the lengths are
not commeasurable, we still can get the same result, using the density of the integer multiples of an irrational angle on the unit circle. Presumably, this
reasoning can be extended to any $s$, providing $\eps>0$ for which $M(\eps,\Omega)$ is very close to $\mu_1(\Omega) / \eps + s$. So if $s > M_D$, for this
specific $\eps$ we get $\eps[M(\eps,\Omega)-M_D]\geq \eps[\mu_1(\Omega) / \eps + s - M_D]> \mu_1(\Omega).$ Hence $\omega_{D}(\Omega) > \mu_{1}(\Omega)$.

\subsubsection{Subsets $\Omega$ dense ``in resolution $\eps$"}\label{dense.in.eps}

Here we show that the role of the frequency bound in the results above decreases as the discrete subset $\Omega \subset B$ becomes denser. For 
$\Omega \subset B$ and for $\eps >0$ we define the ``measure $\mu_1(\eps,\Omega)$
of $\Omega$ in resolution $\eps$" as the minimal possible measure of the coverings of $\Omega$ with $\eps$-intervals.

\begin{proposition}
For each diagram $D$ and for any $\eps >0$ the metric span $\omega_D(\Omega)$ satisfies
$$
\omega_D(\Omega) \ge \mu_1(\eps,\Omega) \left( 1-{{\eps M_D}\over {\mu_1(\eps,\Omega)}} \right)
$$
\end{proposition}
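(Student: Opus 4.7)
The plan is to exploit the obvious relationship between the covering number $M(\eps,\Omega)$ appearing in Definition \ref{span} and the $\eps$-resolution measure $\mu_1(\eps,\Omega)$ just introduced. The inequality $\omega_D(\Omega) \ge \eps[M(\eps,\Omega)-M_D]$ holds for every fixed $\eps > 0$ by definition of the supremum, so it suffices to produce, for each $\eps$, a lower bound on $M(\eps,\Omega)$ in terms of $\mu_1(\eps,\Omega)$.

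First I would observe that any covering of $\Omega$ by $N$ intervals of length $\eps$ has total Lebesgue measure at most $N\eps$ (with equality precisely when the intervals are pairwise disjoint). Applying this to an optimal covering realizing $M(\eps,\Omega)$ yields $\mu_1(\eps,\Omega) \le \eps \cdot M(\eps,\Omega)$, i.e.\ $M(\eps,\Omega) \ge \mu_1(\eps,\Omega)/\eps$.

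Substituting this bound into Definition \ref{span} gives
$$
\omega_D(\Omega) \ge \eps\!\left[\frac{\mu_1(\eps,\Omega)}{\eps} - M_D\right] = \mu_1(\eps,\Omega) - \eps M_D = \mu_1(\eps,\Omega)\left(1 - \frac{\eps M_D}{\mu_1(\eps,\Omega)}\right),
$$
which is exactly the desired inequality. Since the argument is essentially a rearrangement of definitions, I do not anticipate any real obstacle; the only subtlety is the harmless observation that the total measure of an $\eps$-covering cannot exceed $\eps$ times the number of intervals used, and this is what lets us trade $M(\eps,\Omega)$ for $\mu_1(\eps,\Omega)/\eps$ at the cost of the additive correction $\eps M_D$.
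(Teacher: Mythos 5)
Your argument is correct and coincides with the paper's own proof: both start from $\omega_D(\Omega)\ge \eps[M(\eps,\Omega)-M_D]$ and combine it with the inequality $M(\eps,\Omega)\ge \mu_1(\eps,\Omega)/\eps$, which you justify (slightly more explicitly than the paper) by noting that an optimal $\eps$-covering has total measure at most $\eps\,M(\eps,\Omega)$ and at least $\mu_1(\eps,\Omega)$. No gaps; nothing further is needed.
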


\begin{proof}
By the definition $\omega_D(\Omega)\ge \eps [M(\eps,\Omega)-M_D]$. Clearly, $M(\eps,\Omega)\ge {1\over \eps}\mu_1(\eps,\Omega)$. Hence $\omega_D(\Omega)\ge \mu_1(\eps,\Omega) - \eps M_D$.
\end{proof}

So if in a small resolution $\eps$, the measure $\mu := \mu_1(\eps,\Omega) > 0$ then we restore the original Tur\'an-Nazarov inequality for $\Omega$, with a correction factor $1- {{\eps M_D}\over {\mu}}$, with $M_D$ being
the frequency bound.

\subsubsection{Combining the discrete and positive measure cases}\label{comb.cases}

Let a diagram $D$ be fixed, and let $\Omega = \Omega_1 \cup \Omega_2 \subset B$, with $\Omega_1$ a set of a positive measure $\mu$, and $\Omega_2$ a discrete set. We assume that the sets $\Omega_1$ and $\Omega_2$ are
$2{{\mu_1(B)}\over M_D}$-separated, where $M_D$ is the frequency bound for $D$.

\begin{proposition}
$\omega_D(\Omega)\ge \mu + \omega_D(\Omega_2)$
\end{proposition}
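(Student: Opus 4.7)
My plan is to exploit the separation between $\Omega_1$ and $\Omega_2$ to decouple the $\eps$-entropy of $\Omega$ into independent contributions from each set for small $\eps$, then extract from the $\Omega_1$-side a contribution of size $\mu$ via the usual Lebesgue measure bound. The threshold $2\mu_1(B)/M_D$ appearing in the hypothesis is precisely tuned so that this decoupling holds on exactly the range of $\eps$ for which the supremum defining $\omega_D(\Omega_2)$ is nontrivial.

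The first step is to fix $\eps$ with $0 < \eps < 2\mu_1(B)/M_D$ and observe that, since $\mathrm{dist}(\Omega_1,\Omega_2) \ge 2\mu_1(B)/M_D > \eps$, no $\eps$-interval can meet both sets. I would then conclude
\[
M(\eps,\Omega) \;=\; M(\eps,\Omega_1) + M(\eps,\Omega_2).
\]
The second step combines this with the elementary bound $\eps M(\eps,\Omega_1) \ge \mu_1(\Omega_1) = \mu$ (a covering by $N$ intervals of length $\eps$ has total length $\ge \mu$) to produce
\[
\eps\bigl[M(\eps,\Omega) - M_D\bigr] \;\ge\; \mu + \eps\bigl[M(\eps,\Omega_2) - M_D\bigr]
\]
for every $\eps$ in the admissible range. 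Taking supremum over $\eps \in (0,\,2\mu_1(B)/M_D)$ and invoking Definition \ref{span} on the left yields
\[
\omega_D(\Omega) \;\ge\; \mu + \sup_{0 < \eps < 2\mu_1(B)/M_D} \eps\bigl[M(\eps,\Omega_2) - M_D\bigr].
\]

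The only subtle point — and the one I expect to be the main obstacle — is showing that the restricted supremum on the right actually equals the full $\omega_D(\Omega_2)$, i.e.\ that nothing is lost by excluding $\eps \ge 2\mu_1(B)/M_D$. For such large $\eps$, the interval $B$ (and hence $\Omega_2 \subset B$) can be covered by at most $\lceil \mu_1(B)/\eps\rceil \le \lceil M_D/2\rceil$ intervals of length $\eps$, so
\[
\eps\bigl[M(\eps,\Omega_2) - M_D\bigr] \;\le\; \eps\bigl(\lceil M_D/2\rceil - M_D\bigr) \;\le\; 0.
\]
On the other hand $\omega_D(\Omega_2) \ge 0$, since a discrete subset of the bounded interval $B$ is necessarily finite, so that $\eps M(\eps,\Omega_2) \to 0$ and $\eps M_D \to 0$ as $\eps \to 0^+$. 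These two facts together show the supremum defining $\omega_D(\Omega_2)$ is attained in the range $\eps < 2\mu_1(B)/M_D$, and the proposition follows.
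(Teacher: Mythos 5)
Your proof is correct and follows essentially the same route as the paper's: use the separation hypothesis to split $M(\eps,\Omega)$ into $M(\eps,\Omega_1)+M(\eps,\Omega_2)$ for small $\eps$, bound $\eps M(\eps,\Omega_1)\ge\mu_1(\Omega_1)=\mu$, and check that restricting to small $\eps$ loses nothing in either supremum. You are in fact more explicit than the paper about that last point (the paper only remarks that the supremum defining $\omega_D(\Omega)$ is attained for $\eps\le\mu_1(B)/M_D$); your one inessential slip is the claim that a discrete subset of $B$ must be finite (consider $\{1/n\}$), but the conclusion $\omega_D(\Omega_2)\ge 0$ you need holds for any nonempty $\Omega_2$, since $\eps[M(\eps,\Omega_2)-M_D]\ge\eps(1-M_D)\to 0$ as $\eps\to 0^+$.
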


\begin{proof}
By the definition $\omega_D(\Omega) = \sup_\eps \eps [M(\eps,\Omega) - M_D]$, and this supremum is achieved for $\eps \le {{\mu_1(B)}\over M_D}$. Indeed, otherwise $M(\eps,\Omega)-M_D$ would be negative. Hence by the
separation assumption we have $M(\eps,\Omega) = M(\eps,\Omega_1) + M(\eps,\Omega_2)$ and therefore $\omega_D(\Omega) = \sup_\eps \eps(M(\eps,\Omega_1)+ M(\eps,\Omega_2)-M_D)\ge \mu_1(\Omega_1)+\omega_D(\Omega_2)$.
\end{proof}

So in situations as above Theorem \ref{d.turan} improves the original Tur\'an-Nazarov inequality, and the frequency bound applies only to the discrete part of $\Omega$.

\subsubsection{Interpolation with exponential polynomials}\label{exp.interp}

This is a classical topic starting at least with \cite{P} and actively studied today in connection with numerous applications. Theorems \ref{d.turan}, \ref{d.turan1}, 
\ref{d.turan2} connect the Tur\'an-Nazarov inequality on $\Omega \subset B$ with estimates for the robustness of the interpolation from $\Omega$ to $B$. In particular,
 they provide robustness estimates in solving the ``generalized Prony system" for non-uniform samples. See \cite{S,Bat.Yom,Bat.Sar.Yom} for some initial results in this direction.

\section{Multi-dimensional case}\label{multi.dim}

In this section we consider the version of Tur\'an-Nazarov inequality for quasipolynomials in one or several variables due to A. Brudnyi \cite[Theorem 1.7]{B}. We provide a strengthening of this result in the same lines as
above: The Lebesgue measure is replaced with an appropriate ``metric span". First, let us recall some definitions.

\begin{definition}
Let $f_1, \ldots, f_k \in (\C^n)^*$ be a pairwise different set of complex linear functionals $f_j$ which we identify with the scalar products $f_j\cdot z, \ z=(z_1,\ldots,z_n)\in \C^n$. We shall write
$$
f_j=a_j+i b_j
$$
A quasipolynomial is a finite sum
$$
p(z) = \sum_{j=1}^k p_j(z) e^{f_j\cdot z}
$$
where $p_j \in \C[z_1,\ldots,z_n]$ are polynomials in $z$ of degrees $d_j$. The degree of $p$ is $m = \deg p = \sum_{j=1}^k (d_j + 1)$. Following A.Brudnyi \cite{B}, we introduce the exponential type of $p$
$$
t(p) = \max_{1\le j \le k} \max_{z\in B_c(0,1)} |f_j\cdot z|
$$
where $B_c(0,1)$ is the complex Euclidean ball of radius $1$ centered at $0$.
\end{definition}

Below we consider $p(x)$ for the real variables $x=(x_1,\ldots,x_n)\in \R^n$.

\begin{theorem}[\cite{B}]\label{brudnyi}
Let $p$ be a quasipolynomial with parameters $n,m,k$ defined on $\C^n$. Let $B \subset \R^n$ be a convex body, and let $\Omega \subset B$ be a measurable set. Then
$$
\sup_{B} |p| \le \left( \frac{c n \mu_n(B)}{\mu_n(\Omega)} \right)^{\ell} \cdot \sup_{\Omega} |p|
$$
where $\ell= (c(m,k) + (m-1)\log(c_1\max\{1,t(p)\}) + c_2 t(p) \diam(B))$, and $c, c_1,c_2$ are absolute positive constants, and $c(k,m)$ is a positive number depending only on $m$ and $k$.
\end{theorem}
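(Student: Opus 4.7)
\textbf{Proof plan for Theorem \ref{brudnyi}.} My plan is to reduce to the one-dimensional Tur\'an-Nazarov inequality (Theorem \ref{turan}) via a slicing argument adapted to the convex body $B$.

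First, I would observe that on any real affine line $t \mapsto x_0 + tv$ with $v \in \R^n \setminus \{0\}$, the restriction
$$
p(x_0 + tv) = \sum_{j=1}^k p_j(x_0 + tv) \, e^{f_j \cdot x_0} \, e^{t (f_j \cdot v)}
$$
is a univariate generalized exponential polynomial: a linear combination of $m = \sum_j (d_j + 1)$ terms of the form $c \, t^i e^{\lambda t}$. A standard confluence/perturbation argument realizes this object as a uniform limit on compact sets of honest exponential polynomials with $m$ complex exponents, whose real parts satisfy $\max|\Re \lambda_k| \le t(p)|v|$ by the definition of exponential type. Theorem \ref{turan} then applies to $|p|$ on the segment $\ell \cap B$ with degree $m-1$.

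Second, fix $x^* \in B$ with $|p(x^*)| \ge (1-\eps)\sup_B |p|$ and look for a line $\ell$ through (or near) $x^*$ on which $\mu_1(\ell \cap \Omega)$ is a substantial fraction of $\mu_1(\ell \cap B)$. For a convex body this is a classical geometric step: after replacing $B$ by its John ellipsoid at the cost of a dimensional factor $n$, slice along one principal axis and apply Fubini on the orthogonal hyperplane to produce a line $\ell$ with
$$
\frac{\mu_1(\ell \cap B)}{\mu_1(\ell \cap \Omega)} \le \frac{C n \, \mu_n(B)}{\mu_n(\Omega)} .
$$
Either arrange $\ell$ to pass through $x^*$ by iterating the slicing along each principal direction of the ellipsoid, or average the forthcoming one-dimensional bound over a tube of parallel lines using Fubini in the reverse direction.

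Third, assemble the bound. Applying Theorem \ref{turan} on $\ell \cap B$ to the restriction of $|p|$ bounds $|p(x^*)|$ by
$$
e^{\mu_1(\ell \cap B) \, t(p)} \cdot \left( \frac{c \, \mu_1(\ell \cap B)}{\mu_1(\ell \cap \Omega)} \right)^{m-1} \cdot \sup_{\Omega} |p|,
$$
and $\mu_1(\ell \cap B) \le \diam(B)$. The exponential prefactor $e^{t(p)\diam(B)}$ is absorbed into the Remez-type power: provided $cn\mu_n(B)/\mu_n(\Omega) \ge 2$, one has $e^{t(p)\diam(B)} \le \bigl(cn\mu_n(B)/\mu_n(\Omega)\bigr)^{c_2 t(p)\diam(B)}$ for an absolute $c_2$. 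The $(m-1)\log(c_1\max\{1,t(p)\})$ term in $\ell$ arises from rescaling the degree factor $(c\diam(B))^{m-1}$ against the geometric ratio, and $c(m,k)$ collects the combinatorial losses from the confluence limit and the John-ellipsoid reduction. The main obstacle, as I see it, is the geometric step of producing a single line which both sees $x^*$ and carries a good share of $\Omega$: a naive Fubini through $x^*$ loses $\diam(B)^n$ rather than $\mu_n(B)$, which is too crude for thin convex bodies, and the factor $n$ in Brudnyi's bound is the genuine cost of the John-ellipsoid trade converting $\diam(B)^n$ into $\mu_n(B)$ at the price of a dimensional constant. Once this step is secured, the remaining bookkeeping is routine.
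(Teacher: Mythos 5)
This statement is quoted from A.~Brudnyi's paper \cite{B} (Theorem~1.7 there); the present paper gives no proof of it, so there is no internal argument to compare yours against. I can only assess your plan on its own terms.

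Your architecture --- restrict $p$ to real lines, treat the restriction as a univariate exponential polynomial via a confluence limit, apply Theorem~\ref{turan} on a well-chosen line, and absorb the factor $e^{t(p)\diam(B)}$ into the Remez power (which does work once $c$ is fixed so that the ratio is always at least $2$) --- is a reasonable and essentially standard way to attack multivariate Remez-type inequalities. The genuine gap is exactly the step you flag yourself: producing a line $\ell$ that \emph{both} passes through the near-maximum point $x^*$ \emph{and} satisfies $\mu_1(\ell\cap\Omega)/\mu_1(\ell\cap B)\ge \tfrac{1}{n}\,\mu_n(\Omega)/\mu_n(B)$. Your John-ellipsoid-plus-Fubini slicing produces a good line somewhere in $B$, but not through $x^*$, and neither of your proposed fixes closes this. ``Iterating the slicing along each principal direction'' is not an argument, and ``averaging the one-dimensional bound over a tube of parallel lines'' fails because the Tur\'an--Nazarov bound on a line not containing $x^*$ says nothing about $|p(x^*)|$: a supremum is not controlled by an average of suprema over nearby fibers. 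The correct tool is the Brudnyi--Ganzburg measure lemma (the reference \cite{Bru.Gan} in this paper's bibliography): for any convex body $B$, any measurable $\Omega\subset B$, and any prescribed point $x\in B$, there is a line $\ell\ni x$ with $\mu_1(\ell\cap\Omega)/\mu_1(\ell\cap B)\ge 1-\bigl(1-\mu_n(\Omega)/\mu_n(B)\bigr)^{1/n}\ge \tfrac{1}{n}\,\mu_n(\Omega)/\mu_n(B)$; it is proved by a polar-coordinate (cone) decomposition centered at $x$, not by John ellipsoids. With that lemma in hand your scheme would yield an inequality of the stated shape (indeed with a somewhat cleaner exponent than Brudnyi's $\ell$, whose $\log(c_1\max\{1,t(p)\})$ term reflects his different one-dimensional input via valency estimates for Bernstein classes rather than Nazarov's theorem); without it, the proof is not complete.
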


Generalizing this result of Brudnyi, we follow the arguments described in Sections \ref{intro} and \ref{one.dim} above, and \cite{Y}.

\subsection{Covering number of sublevel sets}\label{covering.number}

For a relatively compact $A \subset \R^n$, the covering number $M(\eps,A)$ is defined now as the minimal number of $\eps$-cubes $Q_{\eps}$ covering $A$ (which are translations of the standard $\eps$-cubes $Q_{\eps}^n :=
[0,\eps]^n$).

\begin{lemma}
\begin{align*}
q(x) & := |p(x)|^2 \\
& =\sum_{0\le i \le j \le k} e^{\langle a_i+a_j, x\rangle } \big[ P_{i,j}(x) \sin \langle b_i-b_j , x \rangle + Q_{i,j}(x) \cos \langle b_i-b_j , x \rangle \big]
\end{align*}
is a real exponential trigonometric quasipolynomial with $P_{i,j},Q_{i,j}$ real polynomials in $x$ of degree $d_i + d_j$, and at most $\kappa: = k(k+1)/2$ exponents, sinus and cosinus elements.
\end{lemma}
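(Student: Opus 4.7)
The plan mirrors the one-dimensional argument of Lemma \ref{abs.val.p}: expand $q(x) = p(x)\overline{p(x)}$ directly, then symmetrize over the pairs of indices $(i,j)$ and $(j,i)$.

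First I would decompose each coefficient polynomial into its real and imaginary parts, writing $p_j(x) = R_j(x) + i S_j(x)$ where $R_j, S_j \in \R[x_1,\ldots,x_n]$ have degree at most $d_j$. Since $x \in \R^n$ and $f_j = a_j + i b_j$ with $a_j, b_j \in \R^n$, we have $f_j \cdot x = \langle a_j, x \rangle + i \langle b_j, x \rangle$, so
\begin{equation*}
e^{f_j \cdot x} = e^{\langle a_j, x \rangle}\bigl(\cos \langle b_j, x\rangle + i \sin\langle b_j, x\rangle\bigr).
\end{equation*}

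Next I would expand
\begin{equation*}
q(x) = \sum_{i,j=1}^{k} p_i(x)\,\overline{p_j(x)}\, e^{\langle a_i+a_j,\, x\rangle}\, e^{i\langle b_i-b_j,\, x\rangle},
\end{equation*}
and pair the $(i,j)$ and $(j,i)$ summands. For $i < j$ these two terms are complex conjugates, so their sum equals $2\Re\bigl[p_i(x)\overline{p_j(x)}\, e^{i\langle b_i - b_j, x\rangle}\bigr]$; for $i = j$ the summand $|p_i(x)|^2 e^{2\langle a_i, x\rangle}$ already has the required form with $P_{i,i} \equiv 0$ and $Q_{i,i} = R_i^2 + S_i^2$, since $\sin 0 = 0$ and $\cos 0 = 1$.

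Finally, using $p_i \overline{p_j} = (R_i R_j + S_i S_j) + i(S_i R_j - R_i S_j)$, where both parts are real polynomials of degree $d_i + d_j$, together with the identity $\Re[(A + iB)(\cos\theta + i\sin\theta)] = A\cos\theta - B\sin\theta$, I would read off
\begin{equation*}
P_{i,j}(x) = -2(S_i R_j - R_i S_j)(x), \qquad Q_{i,j}(x) = 2(R_i R_j + S_i S_j)(x)
\end{equation*}
for $i < j$. The count of distinct exponential/sine/cosine combinations is just the number of pairs $(i,j)$ with $1 \le i \le j \le k$, namely $\kappa = k(k+1)/2$. There is essentially no obstacle here: it is a direct lifting of the one-dimensional identity of Lemma \ref{abs.val.p} to $\R^n$, and the degree bound $\deg(p_i \overline{p_j}) \le d_i + d_j$ is immediate.
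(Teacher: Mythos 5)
Your proposal is correct and follows essentially the same route as the paper, whose own proof of this lemma is just the instruction to repeat the one-dimensional computation of Lemma \ref{abs.val.p}: you expand $q=p\bar p$, pair the $(i,j)$ and $(j,i)$ terms into twice a real part, and read off $P_{i,j}$ and $Q_{i,j}$. The only (necessary) adaptation you make is replacing the polar form $c_k=\gamma_k e^{i\phi_k}$ used in one dimension by the Cartesian splitting $p_j=R_j+iS_j$, since the coefficients are now polynomials; the resulting formulas and the count $\kappa=k(k+1)/2$ are all correct.
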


\begin{proof}
By repeating word by word the proof of Lemma \ref{abs.val.p} above, the proof is completed.
\end{proof}

Clearly, all the partial derivatives ${{\partial q(x)}\over {\partial x_j}}$ have exactly the same form. The following bound due to Khovanskii gives an estimate of the number of solutions of a system of real exponential
trigonometric quasipolynomials. More precisely, we have

\begin{theorem}[Khovanskii bound \cite{K}, Section 1.4] \label{thm:khovanskii}
Let $P_1 = \cdots = P_n = 0$ be a system of $n$ equations with $n$ real unknowns $x = x_1,\ldots,x_n$, where $P_i$ is polynomial of degree $m_i$ in $n+k+2p$ real variables $x$, $y_1,\ldots,y_k$, $u_1,\ldots,u_p$, $v_1,
\ldots, v_p$, where $y_i = \exp\langle a_j,x\rangle$, $j=1,\ldots,k$ and $u_q = \sin\langle b_q,x\rangle$, $v_q = \cos\langle b_q,x\rangle$, $q=1,\ldots,p$. Then the number of non-degenerate solutions of this system in the
region bounded by the inequalities $|\langle b_q, x\rangle| < \pi/2$, $q=1,\ldots,p$, is finite and less than
$$
m_1 \cdots m_n \left( \sum m_i + p + 1\right)^{p+k} 2^{p+(p+k)(p+k-1)/2}
$$
\end{theorem}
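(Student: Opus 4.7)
The plan is to view the system as a fewnomial-type Pfaffian system in Khovanskii's sense and invoke the general Pfaffian root-counting machinery. First I would introduce the auxiliary variables $y_j = \exp\langle a_j, x\rangle$ for $j=1,\ldots,k$ and the pairs $u_q = \sin\langle b_q,x\rangle$, $v_q = \cos\langle b_q,x\rangle$ for $q=1,\ldots,p$. These satisfy the polynomial differential relations $dy_j = y_j\,\langle a_j, dx\rangle$, $du_q = v_q\,\langle b_q, dx\rangle$, $dv_q = -u_q\,\langle b_q, dx\rangle$, together with the algebraic constraint $u_q^2 + v_q^2 = 1$. On the open region $\Delta := \{x : |\langle b_q,x\rangle| < \pi/2,\ q=1,\ldots,p\}$, each $v_q$ is strictly positive, hence $\tan\langle b_q,x\rangle = u_q/v_q$ is a single-valued Pfaffian function of order one, and the sequence $y_1,\ldots,y_k,\tan\langle b_1,x\rangle,\ldots,\tan\langle b_p,x\rangle$ forms a Pfaffian chain of total length $p+k$.

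Next I would run the Rolle--Khovanskii induction on the length of the chain. The underlying principle is that given a Pfaffian system $P_1=\cdots=P_n=0$ of degrees $m_i$ in the base variables and the chain functions, one adjoins an additional polynomial equation of controlled degree (essentially a Jacobian-type determinant involving the Pfaffian derivatives) and applies a Rolle-type argument along the integral curves of the last Pfaffian element. Each reduction eliminates one transcendental function at the cost of a bounded additive increase in the total degree, a multiplicative factor $2$ counting the two sign branches of the Pfaffian derivative at critical points, and a temporary increase in the number of equations that is absorbed by the standard Khovanskii argument. After $p+k$ such reductions, the original system is dominated by a purely polynomial system whose non-degenerate root count is bounded via Bezout by $m_1\cdots m_n$, multiplied by a polynomial expression of degree $p+k$ in the accumulated degrees, which produces the factor $(\sum m_i + p + 1)^{p+k}$.

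The main obstacle is the quantitative bookkeeping that yields the explicit constant $2^{p + (p+k)(p+k-1)/2}$. The contribution $2^p$ tracks the two algebraic branches of each constraint $u_q^2+v_q^2=1$ used to replace the sin/cos pair by a single Pfaffian function on $\Delta$, while the exponent $(p+k)(p+k-1)/2$ counts the cumulative number of sign bifurcations across the $p+k$ successive Rolle reductions, since the $j$-th reduction contributes $j-1$ residual sign choices inherited from previous steps. Restriction to the box $\Delta$ is essential here, because outside $\Delta$ the tangent Pfaffian functions acquire singularities and the chain structure breaks down, so the counting must be localized to this region. Once this combinatorial accounting is carried out carefully, the claimed bound falls out of the general Pfaffian intersection estimate applied to the specific chain constructed above.
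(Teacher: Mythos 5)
The first thing to note is that the paper does not prove this statement at all: Theorem \ref{thm:khovanskii} is quoted verbatim from Khovanskii's monograph \cite{K}, Section 1.4, and is used as a black box (it feeds into Lemma \ref{khovanskii} and hence Lemma \ref{Khov.bound.1}), so there is no in-paper argument to compare yours against. Your sketch does correctly identify the architecture of Khovanskii's own proof: pass to the Pfaffian chain $y_1,\dots,y_k,\tan\langle b_1,x\rangle,\dots,\tan\langle b_p,x\rangle$, which is single-valued on the region $|\langle b_q,x\rangle|<\pi/2$ precisely because $v_q=\cos\langle b_q,x\rangle>0$ there, then run the Rolle--Khovanskii elimination one chain element at a time and finish with a Bezout count. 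That is the right skeleton.

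As a proof, however, the proposal has a genuine gap: everything that actually produces the stated constant is deferred. The passage beginning ``the main obstacle is the quantitative bookkeeping'' and ending ``the claimed bound falls out'' is exactly the content of the theorem. You assert, but do not verify, that the $j$-th Rolle reduction contributes $j-1$ residual sign choices (hence the exponent $(p+k)(p+k-1)/2$), that the accumulated degree increments give exactly $\sum m_i+p+1$ rather than some larger expression, and that the factor $2^p$ is fully accounted for by the relations $u_q^2+v_q^2=1$. Moreover, the replacement of each pair $(u_q,v_q)$ by the single function $\tan\langle b_q,x\rangle$ is not degree-free: $\sin$ and $\cos$ are algebraic of degree $2$ over $\tan$ on this region, so rewriting the $P_i$ in the new variables changes their degrees, and this must be threaded through the induction --- your bookkeeping never addresses it. Since the source paper simply cites \cite{K}, the honest options are either to do the same, or to carry out the Rolle--Khovanskii induction with explicit degree and multiplicity accounting at every step; the outline as written does neither.
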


Let us denote the vectors $b_i-b_j \in \R^n$ by $b_{i,j}$ and let $\lambda := \max \Vert b_{i,j} \Vert$ be the maximal frequency in $q$. The next lemma is a simple consequence of Khovanskii bound:

\begin{lemma}\label{khovanskii}
Let $V$ be a parallel translation of the coordinate subspace in $\R^n$ generated by $x_{j_1},\dots,x_{j_s}$. Then the number of non-degenerate real solutions in $V \cap Q_\rho^n$ of the system
$$
{{\partial q(x)}\over {\partial x_{j_1}}}= \cdots = {{\partial q(x)}\over {\partial x_{j_s}}}= 0
$$
is at most $\hat{C}_s \lambda^s$, where
$$
\hat{C}_s = ({2\over \pi}\sqrt s \rho)^s \prod_{r=1}^s (d_{j_r} + d_{i_r})\left( \sum_{r=1}^s d_{j_r} + d_{i_r} + 2\kappa + 1\right)^{2\kappa} 2^{\kappa+(2\kappa)(2\kappa-1)/2} .
$$
\end{lemma}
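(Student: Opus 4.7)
The plan is to cover $V \cap Q_\rho^n$ by subcubes so small that on each one the arguments of all the sines and cosines appearing in $q$ stay within a window of length $\pi$, apply Theorem~\ref{thm:khovanskii} to the system on each subcube, and multiply by the number of subcubes.

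First, I partition the $s$-dimensional cube $V \cap Q_\rho^n$ (of side $\rho$) into subcubes of side $\eta := \pi / (\sqrt{s}\,\lambda)$. For any pair $(i,j)$, the variation of the linear form $\langle b_{i,j}, x\rangle$ across one such subcube is at most $\|b_{i,j}\|\cdot\sqrt{s}\,\eta \le \lambda\sqrt{s}\,\eta = \pi$, so after subtracting the value $c_{i,j}$ at the center $x^0$, the shifted argument $\tilde\theta_{i,j} := \langle b_{i,j}, x-x^0\rangle$ lies in $[-\pi/2,\pi/2]$. The number of subcubes needed is bounded by $(\rho/\eta + 1)^s \le (2\sqrt{s}\,\rho\lambda/\pi)^s$, which furnishes precisely the $\lambda^s$ together with the leading factor $((2/\pi)\sqrt{s}\,\rho)^s$ in $\hat{C}_s$.

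On a fixed subcube, the angle-addition identities $\sin\langle b_{i,j},x\rangle = \sin c_{i,j}\cos\tilde\theta_{i,j} + \cos c_{i,j}\sin\tilde\theta_{i,j}$ (and the analogue for cosine) let me rewrite $q$ in terms of the shifted arguments $\tilde\theta_{i,j}$, absorbing the constants $\sin c_{i,j}$, $\cos c_{i,j}$ into the coefficients $P_{i,j}, Q_{i,j}$; the structural form of $q$ is preserved, with $\kappa$ exponentials, $\kappa$ sin-cos pairs, and polynomial coefficients of degree $\le d_i+d_j$. A direct differentiation shows that each partial derivative $\partial q / \partial x_{j_r}$ has the same form (the derivative may pull out a factor $(a_i+a_j)_{j_r}$ or $(b_i-b_j)_{j_r}$ from the exponential or trigonometric parts, but does not increase the polynomial degree of the coefficients). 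Hence, viewed as a polynomial in the $s+3\kappa$ real variables $x_{j_1},\dots,x_{j_s}$, $y_{i,j} = e^{\langle a_i+a_j, x\rangle}$, $u_{i,j} = \sin\tilde\theta_{i,j}$, $v_{i,j} = \cos\tilde\theta_{i,j}$, the $r$-th equation has Khovanskii degree $m_r = d_{j_r}+d_{i_r}$.

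I then apply Theorem~\ref{thm:khovanskii} on each subcube with $n=s$, $k=p=\kappa$, which (since $p+k = 2\kappa$) produces at most
$$
\prod_{r=1}^s m_r \cdot \Bigl(\sum_{r=1}^s m_r + 2\kappa + 1\Bigr)^{2\kappa} \cdot 2^{\kappa+(2\kappa)(2\kappa-1)/2}
$$
non-degenerate real solutions per subcube. Multiplying this by the bound on the number of subcubes yields the stated estimate $\hat{C}_s\lambda^s$. The main obstacle I anticipate is the careful bookkeeping of the Khovanskii degree $m_r$: one must verify that the sin/cos and exponential factors, each of degree one in the Khovanskii variables, combined with polynomial coefficients of degree $d_i+d_j$, indeed yield the indexed bound $d_{j_r}+d_{i_r}$ rather than a loose $O(\max_{i,j}(d_i+d_j))$ estimate. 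One must also address the subcube boundaries (e.g.\ by using half-open cells) so that no non-degenerate root is counted twice.
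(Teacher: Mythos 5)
Your proof is correct and follows essentially the same route as the paper's: both cover $V\cap Q_\rho^n$ by roughly $(\tfrac{2}{\pi}\sqrt{s}\,\rho\lambda)^s$ regions on which all the arguments $\langle b_{i,j},x\rangle$ can be normalized into $[-\pi/2,\pi/2]$ (the paper uses translates of the slab intersection $Q=\bigcap\{|\langle b_{i,j},x\rangle|\le\pi/2\}$, which contains a ball of radius $\pi/(2\lambda)$, while you use subcubes of side $\pi/(\sqrt{s}\lambda)$ with an explicit angle-addition recentering --- the same count), and then apply Theorem~\ref{thm:khovanskii} with $k=p=\kappa$ on each piece and multiply.
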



\begin{proof}
The following geometric construction is required by the Khovanskii bound: Let $Q_{i,j} = \{ x \in \R^n, |\langle b_{i,j}, x\rangle| \leq {\pi\over 2}\}$ and let $Q= \bigcap_{0\le i \le j \le k} Q_{i,j}$. For any $B \subset
\R^n$ we define $M(B)$ as the minimal number of translations of $Q$ covering $B$. For an affine subspace $V$ of $\R^n$ we define $M(B\cap V)$ as the minimal number of translations of $Q\cap V$ covering $B\cap V$. Notice that
for $B=Q^n_r$, a cube of size $r$, we have $M(Q^n_r)\leq ({2\over \pi}\sqrt n r \lambda)^n$. Indeed, $Q$ always contains a ball of radius ${\pi \over {2\lambda}}$. Now, applying the Khovanskii bound \ref{thm:khovanskii} on
the system
$$
{{\partial q(x)}\over {\partial x_{j_1}}}= \cdots = {{\partial q(x)}\over {\partial x_{j_s}}}= 0
$$
we get that the number of non-degenerate real solutions in $V \cap Q_\rho^n$ is at most
\begin{align*}
({2\over \pi}\sqrt s \rho \lambda)^s \prod_{r=1}^s (d_{j_r} + d_{i_r})\left( \sum_{r=1}^s d_{j_r} + d_{i_r} + 2\kappa + 1\right)^{2\kappa} 2^{\kappa+(2\kappa)(2\kappa-1)/2}
\end{align*}
\end{proof}

Let a quasipolynomial $p$ be as above. A sublevel set $A=A_{\rho}$ of $p$ is defined as $A=\{ x \in \R^n : |p(x)| \le \rho \}$. The following lemma extends to the case of sublevel sets of exponential polynomials the result
of Vitushkin \cite{V} for semi-algebraic sets. It can be proved using a general result of Vitushkin in \cite{V} through the use of ``multi-dimensional variations". However, in our specific case the proof below is much
shorter and it produces explicit (``in one step") constants.

\begin{lemma}\label{vitushkin}
For any $1\ge \eps>0$ we have
$$
M(\eps , A \cap Q^n_1) \le C_0 + C_1 \left({1 \over \eps}\right) + \cdots + C_{n-1} \left({1 \over \eps}\right)^{n-1} + \mu_n(A) \left({1 \over \eps}\right)^n
$$
where $C_0 , \ldots , C_{n-1}$ are positive constants, which depend only on $k, d_i$ and the maximal frequency $\lambda$ of the quasipolynomial $p$.
\end{lemma}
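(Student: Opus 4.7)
The plan is to induct on the dimension $n$, imitating the classical Vitushkin-type slicing argument for semi-algebraic sets, but with the Khovanskii bound (Theorem \ref{thm:khovanskii}) replacing Bezout at each recursive step.

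For the base case $n=1$, I would apply Lemma \ref{khovanskii} with $s=1$ to the single equation $q(t) = \rho^2$ on $[0,1]$; this bounds the number of endpoints of the intervals comprising $A \cap [0,1]$ by a constant $K_1$ depending only on $k$, $\{d_i\}$, and $\lambda$. Since any $N$ disjoint intervals of total length $L$ can be covered by at most $N + L/\eps$ $\eps$-intervals, one gets $M(\eps , A \cap [0,1]) \le C_0 + \mu_1(A)/\eps$, which is the asserted shape when $n=1$.

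For the inductive step, suppose the bound holds in dimension $n-1$. The strategy is to slice $Q_1^n$ by the $\approx 1/\eps$ horizontal hyperplanes $H_j = Q_1^{n-1} \times \{j\eps\}$. Each intersection $A \cap H_j$ is the sublevel set of the $(n-1)$-variable quasipolynomial $q(\cdot , j\eps)$, whose degree, number of exponents, and frequency bound are inherited from $q$, so the inductive hypothesis applies. The grid-aligned $\eps$-cubes that meet some slice contribute at most $2 \sum_j M(\eps , A \cap H_j)$; combining this with the Cavalieri estimate $\sum_j \eps \, \mu_{n-1}(A \cap H_j) \le \mu_n(A)$ produces the leading term $\mu_n(A)/\eps^n$ together with lower-order powers of $1/\eps$ of the required form.

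The remaining contribution comes from grid-aligned $\eps$-cubes contained in an open slab $Q_1^{n-1} \times (j\eps, (j+1)\eps)$ that intersect $A$ but touch neither bounding slice. For such a cube, the fiber $A \cap (\{x'\} \times \R)$ over some $x'$ in its $(n-1)$-dim base must contain a connected component strictly inside the slab, and by Rolle's theorem this forces a zero of $\partial q/\partial x_n$ in the slab. I would apply Lemma \ref{khovanskii} to the system cut out by $\partial q/\partial x_n = 0$ (or recurse the inductive setup on this lower-dimensional zero set), bounding the total number of such ``interior'' cubes by a constant times $1/\eps^{n-1}$, which is absorbed into the $C_{n-1}/\eps^{n-1}$ term. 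The main obstacle I expect is not any single step but the careful bookkeeping of Khovanskii constants as the induction unfolds, together with verifying that the restriction of $q$ to a coordinate hyperplane (and the passage to $\partial q/\partial x_n$) genuinely yields a quasipolynomial with the same structural parameters, so that the inductive hypothesis applies with uniform constants $C_0, \ldots, C_{n-1}$ depending only on $k$, $\{d_i\}$, and $\lambda$.
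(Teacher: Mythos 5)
Your overall strategy (slice by hyperplanes, induct on dimension, use Khovanskii in place of Bezout) is a legitimate alternative to the paper's argument, but as written it relies on a step that is actually false: the ``Cavalieri estimate'' $\sum_j \eps\,\mu_{n-1}(A\cap H_j)\le \mu_n(A)$. The left-hand side is a Riemann sum for $\int_0^1\mu_{n-1}(A\cap H_t)\,dt=\mu_n(A)$, and a Riemann sum can exceed the integral badly when $A$ concentrates near the sampling hyperplanes --- which is exactly what sublevel sets of quasipolynomials like to do. Take $n=2$, $p(x)=\sin(\pi N x_2)$, $A=\{|p|^2\le\rho^2\}$ and $\eps=1/N$: every slice $H_j$ at height $j/N$ lies entirely inside $A$, so $\mu_1(A\cap H_j)=1$ for all $j$ and $\sum_j\eps\,\mu_1(A\cap H_j)=1$, while $\mu_2(A)=O(\rho)$ is arbitrarily small. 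Your induction would then produce a leading term $(1/\eps)^n$ with coefficient $1$ instead of $\mu_n(A)$, which is precisely the term the lemma must control. This can be repaired by averaging over the vertical offset of the slicing grid (some offset makes the Riemann sum at most the integral, and any grid-aligned cover still bounds $M(\eps,\cdot)$ from above), but that repair is essential and is not in your write-up. A second, softer gap: for the ``interior'' cubes you use Rolle to place a zero of $\partial q/\partial x_n$ in each such cube, but you then need to cover the hypersurface $\{\partial q/\partial x_n=0\}$ by $O\bigl((1/\eps)^{n-1}\bigr)$ cubes. Lemma~\ref{khovanskii} counts isolated non-degenerate solutions of \emph{square} systems and does not directly yield such a covering bound; what you need there is another instance of the very lemma you are proving.

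For comparison, the paper avoids both difficulties with a single non-inductive counting argument: a grid cube meeting $A$ either lies entirely in $A$ (at most $\mu_n(A)/\eps^n$ such cubes), or one considers the \emph{minimal} dimension $s$ of a face of the cube meeting $\partial A$. Minimality forces that face to contain an entire compact component of $A\cap V$, where $V$ is the coordinate $s$-plane through the face, hence a critical point of $q|_V$; Lemma~\ref{khovanskii} applied to the square system $\partial q/\partial x_{j_1}=\cdots=\partial q/\partial x_{j_s}=0$ on each of the $\binom{n}{s}\bigl(1/\eps+1\bigr)^{n-s}$ grid $s$-planes then gives the $C_{n-s}(1/\eps)^{n-s}$ terms directly, with the correct coefficient $\mu_n(A)$ on the top-order term and no slicing or recursion needed.
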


\begin{proof}
The sublevel set $A_\rho$ is defined via the real exponential trigonometric quasipolynomial $q(x)=|p(x)|^2$, i.e. $A = A_{\rho}(p) = \{ x \in Q_1^n : q(x) \le \rho^2 \}$. Let us subdivide $Q_1^n$ into adjacent $\eps$-cubes
$Q_{\eps}$ with respect to the standard Cartesian coordinate system. Each $Q_{\eps}$ having a nonempty intersection with $A$, is either entirely contained in $A$, or it intersects the boundary $\partial A$ of $A$. Certainly,
the number of those boxes $Q_{\eps}$, which are entirely contained in $A$, is bounded by $\mu_n(A) / \mu_n(Q_{\eps}) = \mu_n(A) / \eps^n$. In the other case, where $Q_{\eps}$ intersects $\partial A$, it means that there
exist faces of $Q_{\eps}$ that have a non-empty intersection with $\partial A$. Among all these faces, let us take the one with the smallest dimension $s$. In other words, there exists an $s$-face $F$ of the smallest
dimension $s$ that intersects $\partial A$, for some $s=0,1,\ldots,n$. Let us fix an $s$-dimensional affine subspace $V$, which corresponds $F$. Then $F$ contains completely some of the connected components of $A\cap V$,
otherwise $\partial A$ would intersect a face of $Q_{\eps}$ of a dimension strictly less than $s$. Clearly, inside each compact connected component of $A\cap V$ there is a critical point of $q$, which is defined by the
system of equations ${{\partial q(x)}\over {\partial x_{j_1}}}= \cdots = {{\partial q(x)}\over {\partial x_{j_s}}}= 0$ (assuming that V is a parallel translation of the coordinate subspace in $\R^n$ generated by
$x_{j_1},\dots,x_{j_s}$). After a small perturbation of $q$ we can always assume that all such critical points are non-degenerate. Hence by Lemma \ref{khovanskii} the number of these points, and therefore of the boxes
$Q_{\eps}$ of the considered type, is bounded by $\hat C_s \lambda^s$. According to the partitioning construction of $Q_1^n$, we have at most $\left( \frac {1}{\eps} + 1 \right)^{n-s}$ $s$-dimensional affine subspaces with
respect to the same $s$ coordinates. On the other hand, the number of different choices of $s$ coordinates is $\binom{n}{s}$. It means the number of boxes that have an $s$-face $F$, which contains completely some connected
component of $A\cap V$, is at most $\binom{n}{s}\cdot \left( \frac {1}{\eps} + 1 \right)^{n-s}\hat C_s \lambda^s$, which does not exceed, assuming $\eps \le 1$, the constant $C_{n-s}:= \binom{n}{s}2^{n-s}\hat C_s
\lambda^s(\frac {1}{\eps})^{n-s}$. Note that $C_0$ is the bound on the number of boxes that contain completely some of the connected components of $A$. Thus, we have
$$
M(\eps , A) \le C_0 + C_1 \left({1 \over \eps}\right) + \cdots + C_{n-1} \left({1 \over \eps}\right)^{n-1} + \mu_n(A) \left({1 \over \eps}\right)^n
$$
This completes our proof.
\end{proof}

\section{Metric span and generalized Brudnyi's inequality}

Let $p$ be a quasipolynomial as above, with the parameters $n, k , d_j$. These parameters, together with the maximal frequency $\lambda$ of $p$ form the multi-dimensional diagram $D$ of $p$. Notice that in contrast to the
one-dimensional case (and with Theorem \ref{brudnyi}) we restrict ourselves to the unit box $Q^n_1$. So $B$ does not appear in the diagram. For a given $0 < \eps \le 1$ let us denote by $M_D(\eps)$ the quantity
$M_D(\eps)=\sum_{j=0}^{n-1} C_j ({1\over \eps})^j$, where $C_0 , \ldots , C_{n-1}$ are the constants from Lemma \ref{vitushkin}. Extending the terminology from the one-dimensional case above, we call $M_D(\eps)$ the
``frequency bound" for $D$. Note that the constants $C_j$ depend only on the parameters $n, k, d_i$ and on the maximal frequency $\lambda$ of the quasipolynomial $p$. By Lemma \ref{vitushkin} for any sublevel set $A_\rho$ of
$p$ we have
$$
M(\eps , A) \le M_D(\eps) + \mu_n(A) \left( \frac{1}{\eps} \right)^n
$$
Now for any subset $\Omega \subset Q_1^n$ we introduce the metric span $\omega_D$ of $\Omega$ with respect to a given diagram $D$ as follows:

\begin{definition}
For a subset $\Omega \subset \R^n$ the metric span $\omega_D$ is defined as
$$
\omega_D(\Omega) = \sup_{\eps > 0} \eps^n [M(\eps , \Omega) - M_D(\eps)] .
$$
\end{definition}

\begin{lemma}\label{metric span}
Let $A \subset Q_1^n$ be a sublevel set of a real quasipolynomial with the diagram $D$. Then for any $\Omega \subset A$ we have
$$
\mu_n (A) \ge \omega_D(\Omega) .
$$
\end{lemma}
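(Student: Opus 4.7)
The plan is to mirror, step by step, the argument of Lemma \ref{meas.span} from the one-dimensional case, substituting the multi-dimensional covering bound of Lemma \ref{vitushkin} for the simple counting used in dimension one. The key ingredient is already in place: Lemma \ref{vitushkin} provides, for the sublevel set $A$ of a quasipolynomial with diagram $D$, the bound
$$
M(\eps , A) \le M_D(\eps) + \mu_n(A) \left( \frac{1}{\eps} \right)^n ,
$$
where $M_D(\eps)=\sum_{j=0}^{n-1} C_j (1/\eps)^j$ aggregates the boundary-type contributions that depend only on the diagram $D$.

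First I would use monotonicity of the covering number: since $\Omega \subset A$, every cover of $A$ by $\eps$-cubes also covers $\Omega$, so $M(\eps,\Omega)\le M(\eps,A)$. Combining this with the Vitushkin-type estimate above, I get, for every $\eps>0$,
$$
M(\eps,\Omega) \le M_D(\eps) + \mu_n(A) \left( \frac{1}{\eps} \right)^n .
$$
Rearranging yields
$$
\mu_n(A) \ge \eps^n\bigl[M(\eps,\Omega) - M_D(\eps)\bigr].
$$

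Finally, since the left-hand side is independent of $\eps$, I take the supremum over $\eps>0$ of the right-hand side. By the definition of $\omega_D(\Omega)$, this supremum is exactly $\omega_D(\Omega)$, which gives the desired inequality $\mu_n(A)\ge \omega_D(\Omega)$. There is essentially no obstacle here: all the real work was done in establishing Lemma \ref{vitushkin} (the multi-dimensional covering estimate via the Khovanskii bound), and this lemma is just the clean packaging of that estimate into a form usable in the proof of the multi-dimensional Turán-Nazarov type theorem that will follow.
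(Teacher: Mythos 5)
Your proof is correct and follows exactly the paper's own argument: monotonicity of the covering number $M(\eps,\Omega)\le M(\eps,A)$, the bound from Lemma \ref{vitushkin}, rearrangement, and taking the supremum over $\eps>0$ against Definition of $\omega_D$. Nothing further is needed.
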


\begin{proof}
This fact follows directly from Lemma \ref{vitushkin}. Indeed, for any $\eps>0$ we have
$$
M(\eps , \Omega) \le M(\eps , A) \le M_D(\eps) + \mu_n(A) \left( \frac{1}{\eps} \right)^n .
$$
Consequently, for any $\eps >0$ we have $ \mu_n(A) \ge \eps^n [M(\eps , \Omega) - M_D(\eps)]$. Now, we can take the supremum with respect to $\eps$.
\end{proof}

For some examples and properties of sets in $\R^n$ with positive metric span, see \cite[Section 5]{Y}. Here we mention only that for a measurable $\Omega \subset \R^n$ we always have $\omega_D(\Omega)\ge \mu_n(\Omega)$. The
proof is exactly the same as in the remark after Theorem \ref{d.turan}. Now we can prove our generalization of Brudnyi's Theorem \ref{brudnyi} above.

\begin{theorem}\label{d.brudnyi}
Let $p$ be as above and let $\Omega \subset Q^n_1$. Then
$$
\sup_{Q^n_1} |p| \le \left( \frac{c n \mu_n(B)}{\omega_D(\Omega)} \right)^{\ell} \cdot \sup_{\Omega} |p|.
$$
\end{theorem}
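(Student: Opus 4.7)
The plan is to repeat the template that worked for Theorem \ref{d.turan}, now supported by the multi-dimensional machinery built in Section \ref{covering.number}. All the substantive content sits in Lemma \ref{vitushkin} (the Vitushkin-type covering bound for sublevel sets of $|p|^2$) and in its dualization Lemma \ref{metric span} (comparing $\mu_n(A)$ with the metric span of any subset of $A$). With these in hand, combining them with Brudnyi's inequality Theorem \ref{brudnyi} is essentially bookkeeping.

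Concretely, I would set $\hat\rho = \sup_\Omega |p|$ and introduce the sublevel set $A = A_{\hat\rho} = \{x \in Q^n_1 : |p(x)| \le \hat\rho\}$. By construction $\Omega \subset A$ and $\sup_A |p| \le \hat\rho$. Lemma \ref{metric span} then yields $\mu_n(A) \ge \omega_D(\Omega)$. Since $Q^n_1$ is a convex body and $A$ is a measurable subset of it, Brudnyi's Theorem \ref{brudnyi} applies with $B = Q^n_1$, giving
$$\sup_{Q^n_1} |p| \le \left(\frac{c n \mu_n(Q^n_1)}{\mu_n(A)}\right)^{\ell} \sup_A |p| \le \left(\frac{c n \mu_n(Q^n_1)}{\omega_D(\Omega)}\right)^{\ell} \sup_\Omega |p|.$$
Reading the $\mu_n(B)$ in the statement as $\mu_n(Q^n_1)$ (consistent with the hypothesis $\Omega \subset Q^n_1$ and the fact that no larger $B$ plays a role here), this is precisely the advertised bound.

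There is no genuinely hard step remaining. The exponent $\ell$ in Theorem \ref{brudnyi} depends on $m, k, t(p)$ and $\diam(B)$ but not on the particular subset, so applying Brudnyi's inequality to $A$ rather than to $\Omega$ uses the \emph{same} $\ell$ with $B = Q^n_1$, and the two inequalities compose without loss. The only watchpoint is that Lemma \ref{metric span} requires $A$ to be a sublevel set of a quasipolynomial whose diagram matches the $D$ used to define $\omega_D$; this is automatic here, since $A_{\hat\rho}$ is built from the same $p$ (equivalently, from $q = |p|^2$) whose diagram $D$ was fixed from the outset.
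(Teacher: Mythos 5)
Your proof is correct and follows exactly the paper's own argument: set $\hat\rho=\sup_\Omega|p|$, note $\Omega\subset A_{\hat\rho}$, invoke Lemma \ref{metric span} to get $\mu_n(A_{\hat\rho})\ge\omega_D(\Omega)$, and apply Brudnyi's Theorem \ref{brudnyi} with $B=Q_1^n$ to the sublevel set. Your remark that the $\mu_n(B)$ in the statement should be read as $\mu_n(Q_1^n)$ is a fair observation about the statement's notation, not a gap in the argument.
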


\begin{proof}
Let $\hat \rho := \sup_{\Omega} |p|$. For the sublevel set $A_{\hat \rho}$ of the quasipolynomial $p$ we have $\Omega \subset A_{\hat \rho}$. By Lemma \ref{metric span} we have $\mu_n (A_{\hat \rho}) \ge \omega_D(\Omega)$.
Now since $p$ is bounded in absolute value by $\hat\rho$ on $A_{\hat \rho}$ by definition, we can apply Theorem \ref{brudnyi} with $B=Q_1^n$ and $A_{\hat \rho}$. This completes the proof.
\end{proof}

\address

\end{document}